\newtheorem{theorem}{Theorem}
\newtheorem{prop}{Proposition}
\newtheorem{definition}{Definition}
\newtheorem{remark}{Remark}
\newtheorem{corr}{Corollary}
\newcommand{\nstar}{N^{*}}
\begin{document}

\title{Cohomology of the Iwasawa subgroup of the group $U(p,p)$ in nonunitary representations}

\author{A. M. Vershik\thanks{St.~Petersburg Department of Steklov Mathematical Institute; St.~Petersburg State University, St.~Petersburg, Russia. E-mail: {\tt аvershik@gmail.com}. Supported by the RSF grant  14-11-00581.}
 \and 
M. I. Graev\thanks{Institute for System Analysis, Moscow, Russia. E-mail: {\tt mgraev\_36@mtu-net.ru}. Supported by the RFBR grant 13-01-00190-a.}}


\date{September 14,  2015}

\maketitle

\section{Introduction}

This article links up with our paper \cite{1}, where the same problem as here was considered, but for the group
 $U(2,2)$. The main goal is to construct a so-called special representation of the group, i.e., a representation for which the 1-cohomology of the group with values in this representation is nontrivial. We consider the maximal solvable subgroup of
 $U(p,p)$, the Iwasawa subgroup. For commutative and nilpotent groups, the 1-cohomology with values in irreducible unitary representations other than one-dimensional ones is trivial. In particular, it is not clear which solvable groups have irreducible special injective unitary representations. Even in the case of solvable Lie groups we are interested in, the situation seems to be not so simple. The Iwasawa subgroup of     $U(p,p)$ is an extension of the commutative (additive) group of Hermitian matrices of order $p$ and the group of lower triangular matrices of order $p$ with positive diagonal elements, which acts by conjugations on the normal subgroup.

The result of the paper (Theorem~1) is a construction of a special injective operator-irreducible {\it nonunitary bounded representation} of the Iwasawa subgroup of $U(p,p)$ for $p>1$. If ${p=1}$, there exists
a special injective {\it unitary representation}; this is well known, since 
$SU(1,1)$ is a group of rank~1 
(isomorphic to $SL(2,\mathbb{R})$). Whether such a representation exists for
$p>1$ is unknown. The constructed nonunitary special representation is bounded, i.e., all operators of the representation are bounded, which allows one to study it by methods similar to those used in studying unitary representations. Presumably, as in the case   $p=2$ (see \cite{1}), this representation can be extended to an unbounded representation of the whole group 
$U(p,p)$, but this unboundedness can also be in a sense controlled. The importance of special representations (unitary or not) comes from the fact that they can be used to construct, by analogy with the theory for groups of rank~1, representations of groups of currents with values in the corresponding semisimple groups  $U(p,p)$. This  will be considered in another paper.

\section{Background definitions}

\subsection{The Iwasawa subgroup}
Consider the Iwasawa subgroup (= maximal solvable subgroup) 
$P$ of the simple Lie group $U(p,p)$. It can be written as the semidirect product
\begin{align*}
P = S \rtimes N,
\end{align*}
where $S$ is the solvable group of lower triangular matrices of order $p$ with entries
 $s_{ij}$ such that $s_{ii}>0$   and $s_{ij} \in \mathbb{C}$ for $i>j$. From a geometric point of view, $S$ is the product of the complex vector space ${\mathbb C}^{\frac{p(p-1)}{2}}$ and the real octant of dimension~$p$; the group $S$ is not unimodular, but, using geometry, one can easily describe the left and right Haar measures on $S$. The commutative group 
$N$ is the additive group of skew-Hermitian matrices of order~$p$. The real dimensions of these subgroups are equal to 
 $p^2$. Elements of the group $P$ are pairs $(s, n)$, $s \in S$, $n \in N$. In this notation, the product of elements of $P$ is given by the following formula:
\begin{align*}
(s_1, n_1) \cdot (s_2, n_2) = (s_1s_2, s_2^{-1}n_1s_2^{*-1}+n_2).
\end{align*}

For $p=1$, the group $P$ is the two-dimensional subgroup in $GL(2,\mathbb{C})$  of lower triangular $2\times2$ matrices  with equal positive diagonal entries and an imaginary entry under the diagonal.

But we will use only the above definition of the group $P$,  its matrix realization is irrelevant for us here. Our aim is to find  faithful representations of $P$ with nontrivial 1-cohomology. For a representation $\pi$ in a vector space $H$, this means that there exists a $\pi$-valued 1-cocycle, i.e., a map
 $\beta: P\rightarrow H$ such that
 \begin{align}\label{eq:1}
\beta(g_1 \cdot g_2) = \beta(g_1) + T(g_1)\beta(g_2), 
\end{align}
that is not cohomologous to zero (i.e., does not have the form $T_gf-f$).
In the paper, we describe a family of such nonunitary representations.

\subsection{The Hilbert space $H$}
Denote by $\nstar$ the dual group (the group of additive complex characters) of $N$, with elements $m$ and the pairing
\begin{align*}
\langle n, m\rangle = \mathrm{Tr} \,(nm) \in \mathbb{R}.
\end{align*}
We fix the inner product and realize $\nstar$ as a group isomorphic to $N$. The group $S$ acts on
 $\nstar$ by the automorphisms $m \mapsto s^{*}ms$. 

Consider the Hilbert space
\begin{align*}
H = L^2(\nstar, d\nu(m)),
\end{align*}
where $d\nu(m)$ is the Lebesgue measure on $\nstar$, which is only quasi-invariant with respect to the action of $S$:
$$ 
d(s^*ms)=\theta^{2p}(s) \,dm, \quad 
\theta(s) = s_{11}\cdot\ldots \cdot s_{pp} .
$$

\subsection{The space $\mathcal{H}$}

On the group $\nstar$ there are $2^p$ orbits of the group $S$ of maximal dimension
$\dim(S) = p^2$, and their union is complete 
 in the space $\nstar$. These orbits are $\nstar_{\varepsilon} = \{s^{*}\varepsilon s\}$, where $\varepsilon = 
(\varepsilon_1, \ldots, \varepsilon_p)$, $\varepsilon_i = \pm 1$. 
The set of all these orbits $\nstar_{\varepsilon}$ is of full  measure  in  $\nstar$ with respect to $d\nu(m)$, hence the Hilbert space ${H}$ can be written as the direct sum of the subspaces  ${H}_{\varepsilon}$ of functions $f(m)$ concentrated on $\nstar_{\varepsilon}$: 
\begin{align*}
{H} =  \bigoplus_{\varepsilon} {H}_{\varepsilon}.
\end{align*}

\begin{definition}
Denote by $\nstar_0$ the orbit $\{is^{*}s \mid s \in S\}$, and let
$\mathcal{H}$ be the Hilbert subspace of functions concentrated on this orbit, with the norm
$\| f\|^2=\int\limits_{N_0^* }|f(m)|\,d\nu (m)$.  
\end{definition}

\begin{remark}
The remaining $S$-orbits of maximal dimension are $\nstar_{\varepsilon} = 
\{s^{*}\varepsilon s\}$, where $\varepsilon = (\varepsilon_1, \dots, 
\varepsilon_p)$, $\varepsilon_i = \pm 1$. 
\end{remark}

There exists (in general, not measure-preserving) isomorphism of the spaces
$H_{\varepsilon}$ commuting with the action of the group $S$ on these spaces.

The representations (of the groups $N,S,P$) considered below, which are defined on the whole space $H$, induce the corresponding representations on the space
$\mathcal{H}$, given by the same formulas. Hence in what follows we will consider representations only on the space  $\mathcal{H}$.

One can construct another realization of the space $\mathcal{H}$ in terms of the group $S$. To this end, note that the map
 $S \longrightarrow \nstar$ of the form $s 
\mapsto  is^{*}s=m \in  \nstar_0$ is a bijection commuting with the right multiplication on the group $S$ and the action of $S$ on $\nstar_0$.\footnote{This follows from the fact that a positive definite Hermitian matrix can be uniquely written as a product of a lower triangular matrix with positive diagonal entries  and the conjugate matrix.}
In view of this isomorphism,   $\mathcal{H}$ can be realized as the Hilbert space
\begin{align*}
\mathcal{H} = L^2(S, d\widetilde{\nu(s)}),
\end{align*}
where $d\widetilde{\nu(s)}$  is the image of the measure $d\nu(m)$ under the bijection $\nstar_0 
\longrightarrow S$, i.e., as the space of functions on $S$ with the norm
$||f||^2 = \int\limits_S |f(s)|d\widetilde{\nu(s)}$. In this new realization, the operators of the representation are given by the following formula:
\begin{align*}
T(n)f(s) = \exp(i\text{Tr}(sns^{*})f(s);
\end{align*}
thus, just as in the first realization, they act fiberwise. In what follows, we do not use this realization.

\subsection{The representations of the groups $S$ and $P$ in the space $\mathcal{H}$.}

A unitary representation of the group $N$ in the space $\mathcal{H}$ is defined by the formula
\begin{align*}
T(n)f(m) = \exp(i\mathrm{Tr} \,(nm))f(m).
\end{align*}
There are many ways to define a representation of the group $S$ on
$\nstar_0$, namely,
$$
(T(s)f)(m)= \gamma(s,m)f(sms^*),
$$
where $\gamma(s,m)$ is an arbitrary {\it multiplicative cocycle of $S$} with values in the space of invertible functions on $\nstar_0$. The standard choice of this cocycle is the density of the image of the measure $\nu$  under the action of an element $s$ with respect to the original measure. But we will choose a multiplicative cocycle of the group $S$ later, and now let
$a(s)$ be a positive function on $S$. At the moment, we formally define a representation of $S$ by the following formulas:
\begin{align}\label{eq:2}
T_a(s_0)f(m) = \frac{a(s^{*}_0ms_0)}{a(m)}f(s_0^{*}ms_0).
\end{align}
One can easily see that these operators determine a formal representation of the group $S$ and that together with the operators of the subgroup $N$ they generate a formal representation of the whole group~$P$.

We have
\begin{align*}
||T_a(s_0)f||^2 = \int\limits_{\nstar_0} \lvert f(m)b(m, s_0)\rvert^2d\nu(m),
\end{align*}
где
\begin{align*}
b(m, s_0) = 
\frac{a(s^{*}_0ms_0)}{a(m)}\left(\frac{d\nu(s_0^{*-1}ms_0^{-1})}{d\nu(m)}\right)^{1/2}.
\end{align*}

\begin{corr}
{\rm 1)} If $b(m, s_0) \equiv 1$, then the operator  $T_a$ is unitary.

{\rm 2)} If $|b(m,s_0)| < c$, then $||T_a(s_0)f|| < \infty$.

{\rm 3)} If none of these conditions is satisfied, then the operator $T_a$ 
is unbounded.
\end{corr}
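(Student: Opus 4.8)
The plan is to deduce all three parts directly from the norm identity
\[
\|T_a(s_0)f\|^2=\int_{\nstar_0}|f(m)|^2\,|b(m,s_0)|^2\,d\nu(m),
\]
established just above, which I may assume. This identity says that, as far as norms are concerned, $T_a(s_0)$ acts exactly like the operator of multiplication by $b(\cdot,s_0)$ on $L^2(\nstar_0,d\nu)$: for every $f\in\mathcal H$ one has $\|T_a(s_0)f\|=\|b(\cdot,s_0)f\|_{L^2}$. Only the modulus $|b(\cdot,s_0)|$ enters, so each assertion reduces to a standard property of the multiplier $|b(\cdot,s_0)|$, and the argument never needs the explicit form of $b$.

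I would begin with (2). If $|b(m,s_0)|<c$ for almost every $m$, the identity gives $\|T_a(s_0)f\|^2\le c^2\int|f|^2\,d\nu=c^2\|f\|^2$, so $T_a(s_0)$ maps $\mathcal H$ into itself and is bounded with $\|T_a(s_0)\|\le c$. Part (1) is the limiting case $b(\cdot,s_0)\equiv1$, where the identity becomes $\|T_a(s_0)f\|=\|f\|$, i.e.\ $T_a(s_0)$ is an isometry. To promote an isometry to a unitary operator I would invoke the representation property recorded after \eqref{eq:2}: the relation $T_a(s_0)T_a(s_0^{-1})=\mathrm{Id}$ combined with the isometry $\|T_a(s_0)g\|=\|g\|$ gives $\|T_a(s_0^{-1})f\|=\|T_a(s_0)T_a(s_0^{-1})f\|=\|f\|$, so $T_a(s_0^{-1})$ is an isometry as well and in particular maps $\mathcal H$ into $\mathcal H$. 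Then $f=T_a(s_0)\bigl(T_a(s_0^{-1})f\bigr)$ exhibits $T_a(s_0)$ as surjective, and a surjective isometry is unitary.

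For (3) I would produce a sequence of test functions with unbounded Rayleigh quotients. The failure of the conditions in (1)--(2) means precisely that $|b(\cdot,s_0)|$ is not essentially bounded, so the super-level sets $E_n=\{m\in\nstar_0:|b(m,s_0)|>n\}$ have positive $\nu$-measure for every $n$. Using $\sigma$-finiteness of $\nu$, choose measurable $F_n\subseteq E_n$ with $0<\nu(F_n)<\infty$ and set $f_n=\mathbf{1}_{F_n}\in\mathcal H$. The identity then yields $\|T_a(s_0)f_n\|^2=\int_{F_n}|b|^2\,d\nu\ge n^2\nu(F_n)=n^2\|f_n\|^2$, whence $\|T_a(s_0)\|\ge n$ for all $n$ and $T_a(s_0)$ is unbounded.

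The statement is essentially a transcription of the theory of multiplication operators, so no step is genuinely deep. The two places that call for care are the passage from isometry to unitarity in (1) --- where surjectivity must be extracted from the group structure, since an isometry of an infinite-dimensional space need not be onto --- and the verification in (3) that the super-level sets $E_n$ have positive measure and contain subsets of finite positive measure, which is exactly what guarantees that the test functions $f_n$ exist.
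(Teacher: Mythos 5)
Your proposal is correct and follows essentially the same route as the paper, which states the corollary as an immediate consequence of the displayed norm identity $\|T_a(s_0)f\|^2=\int|f(m)\,b(m,s_0)|^2\,d\nu(m)$, i.e.\ by reading $T_a(s_0)$ norm-wise as multiplication by $b(\cdot,s_0)$. The two details you spell out --- extracting surjectivity in (1) from the group relation $T_a(s_0)T_a(s_0^{-1})=\mathrm{Id}$ rather than from the isometry alone, and building indicator test functions on super-level sets of $|b|$ in (3) --- are exactly the standard multiplication-operator facts the paper leaves implicit, so they supplement rather than diverge from its argument.
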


One can easily see that the operators $T(n)$ and $T_a(s_0)$ generate, for every fixed $a$, a representation of the whole group $P$. Analogous representations can be defined in a similar way on all orbits of the group~$S$.

\begin{prop}
The constructed representations in the spaces $H_{\epsilon}$ of functions on orbits of the group $S$ are operator-irreducible and pairwise nonequivalent.
\end{prop}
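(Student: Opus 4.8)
The plan is to prove operator-irreducibility by a Schur-type argument that exploits the two subgroups separately, and to deduce pairwise nonequivalence by looking only at the restriction to the normal subgroup $N$. Throughout, fix one orbit $\nstar_{\varepsilon}$ and work inside $H_{\varepsilon}$.

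First I would identify the commutant of the subgroup $N$. The operators $T(n)$ act as multiplications by the characters $\chi_n(m)=\exp(i\,\mathrm{Tr}(nm))$, and these characters separate the points of $\nstar_{\varepsilon}$; hence the family $\{T(n):n\in N\}$ generates, in the strong operator topology, the whole algebra $L^2$-multiplication operators, i.e.\ $L^{\infty}(\nstar_{\varepsilon})$ acting by multiplication. This algebra is maximal abelian, so it equals its own commutant. Consequently any bounded operator $A$ commuting with all $T(n)$ must itself be a multiplication operator $A=M_{\phi}$ with $\phi\in L^{\infty}(\nstar_{\varepsilon})$.

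Next I would impose commutativity with the operators $T_a(s_0)$ of \eqref{eq:2}. Writing out $A\,T_a(s_0)=T_a(s_0)\,A$ and cancelling the common scalar factor $a(s_0^{*}ms_0)/a(m)$ together with the shifted argument gives $\phi(m)=\phi(s_0^{*}ms_0)$ for almost all $m$ and all $s_0\in S$; that is, $\phi$ is $S$-invariant on $\nstar_{\varepsilon}$. Since $\nstar_{\varepsilon}$ is a single $S$-orbit, the action of $S$ is transitive, hence ergodic with respect to the quasi-invariant measure $d\nu$, so every $S$-invariant function is constant almost everywhere. Thus $\phi$ is a constant and $A$ is a scalar, which is operator-irreducibility. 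For nonequivalence I would use that the restriction of each representation to $N$ is \emph{unitary} and coincides with the multiplication representation, whose spectral (projection-valued) measure is supported exactly on $\nstar_{\varepsilon}$. If $U:H_{\varepsilon}\to H_{\varepsilon'}$ were a bounded invertible intertwiner, it would intertwine the two unitary $N$-representations; taking adjoints in the intertwining relation shows $U^{*}U$ commutes with every $T(n)$, so in the polar decomposition $U=V|U|$ the factor $|U|$ commutes with $T(n)$ and $V$ is a \emph{unitary} intertwiner of the $N$-representations. Such a $V$ carries the spectral measure on $\nstar_{\varepsilon}$ onto that on $\nstar_{\varepsilon'}$. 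But for $\varepsilon\neq\varepsilon'$ the orbits $\nstar_{\varepsilon}$ and $\nstar_{\varepsilon'}$ are disjoint, the ordered diagonal sign pattern $\varepsilon$ being an invariant of the congruence action $m\mapsto s^{*}ms$ of the lower-triangular group $S$ (a refinement of Sylvester's inertia invariant); the two spectral measures are therefore mutually singular. Choosing a Borel set separating the supports forces $V=0$, contradicting invertibility, so no such $U$ exists.

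The routine parts are the two algebraic cancellations. The main obstacle is the operator-theoretic input: first, rigorously identifying $\{T(n)\}'$ with the full multiplication algebra (separation of points plus a density argument in the strong topology), and second, controlling intertwiners despite the nonunitarity of the $S$-part. Here the decisive point is that the $N$-part is always unitary, which is precisely what legitimizes the polar-decomposition reduction and lets the disjointness of the orbits do the work. Verifying transitivity, hence ergodicity, on each orbit $\nstar_{\varepsilon}$ is where the geometry of $S$ — the unique triangular factorization of positive definite Hermitian matrices, extended to each signature class — is actually used.
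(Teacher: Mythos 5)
Your proof is correct and follows essentially the same route as the paper: the commutant of $\{T(n)\}$ is identified with the maximal abelian multiplication algebra $L^{\infty}(\nstar_{\varepsilon})$, commutation with the operators $T_a(s_0)$ forces $S$-invariance of the multiplier, transitivity (hence ergodicity) of $S$ on the orbit makes it constant, and pairwise nonequivalence comes from the disjointness of the orbits $\nstar_{\varepsilon}$. Your polar-decomposition reduction of a bounded invertible intertwiner to a unitary intertwiner of the (unitary) $N$-restrictions, together with the mutual singularity of the spectral measures, is a correct and careful elaboration of the step the paper dismisses as obvious.
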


Indeed, an operator commuting with the operators of the representation of the group $N$ must be a multiplicator, since the operators $T(n)$ generate a maximal commutative subalgebra (the algebra of all multiplicators); and a multiplicator commuting with all operators $T_a(\cdot )$ must be constant, since the action of $S$ on an orbit is transitive, and hence ergodic, by the local compactness of the orbit. Representations on different orbits are, obviously, nonequivalent, since the orbits are disjoint.

\section{Nontrivial cohomology of a commutative group with values in the regular representation}

\subsection{The general construction}
As is well known, a commutative locally compact group has no nonidentity unitary representations with nontrivial 1-cohomology; however, the regular representation in $L^2(G)$, which weakly contains the identity representation, does have nontrivial 1-co\-ho\-mo\-lo\-gy. One should take a function that is locally square summable but does not belong to 
$L^2(G)$ because it is not integrable at infinity; note that the difference of such a function with any its shift does belong to
$L^2(G)$. The dual and, of course, equivalent approach adopted below is to realize the regular representation in
$L^2(\widehat G)$, i.e., in the group of characters, by multiplicators, and then find a function such that its singularity at zero (at the identity character) is not square integrable, but the multiplication of this function by
 $1-\langle \chi,g\rangle$ brings it back to
$L^2(\widehat G)$ for every character $\chi$. It is the latter idea that is implemented below for the group $N$. Of course, the choice of the function is not unique; moreover, there is a continuum of pairwise noncohomologous cocycles. Thus the groups
 $H^1(G;\mathbb R)$ and  $H^1(G;\mathbb T)$  are infinite-dimensional. Worse yet, the group of cocycles cohomologous to zero is dense in the group of all cocycles, hence the cohomology group has no reasonable structure. By contrast to this, note that for semisimple groups of rank~1, the groups $H^1(G;\mathbb R)$ are finite-dimensional and nonzero.

\subsection{A special representation of the group $N$}
Consider the space $H = L^2(\nstar, dm)$, where $dm$ is the Lebesgue measure on $\nstar$, in which we have defined the unitary representation
$T$. We will prove that there exists a function 
 $f_0(m)$ such that $f_0$ does not belong to $H$, but the function $\beta(n) = T(n)f_0 - f_0 $ does belong to $H$, i.e.,
$||\beta(n)|| < \infty$ for every $n \in N$. It will follow that  $\beta$ is a nontrivial 1-cocycle  of the representation $T$, i.e., the representation $T$ is special.

Put $|m|^2 = \mathrm{Tr} \, (mm^{*})$ and define a function $\beta(n)$ 
by the following formula:
\begin{align*}
\beta(n) = T(n)f_0 - f_0 \quad  \text{ where } \quad f_0(m) = \frac{e^{-|m|}}{|m|^{p^2/2}}.
\end{align*}

Let us check that the function $f_0$ satisfies the required conditions. To this end, consider spherical coordinates on $\nstar$; namely, given a vector $m$, its spherical coordinates are 
 $r = 
|m|$ and $\omega = r^{-1}m$. The set of elements $\omega \in \nstar$ 
such that  $|\omega| = 1$ will be called the sphere of $\nstar$ and denoted by $\Omega$. Thus every element $m \in \nstar$  
can be uniquely written in the form 
\begin{align*}
m = r\omega \quad \text{ where } r = |m|\quad  \text{ and }  \quad \omega \in \Omega.
\end{align*}

In these spherical coordinates, we have $dm = dr^{p^2-1}drd\omega$, where
$d\omega$ is a measure 
on the sphere $\Omega$, whence 
\begin{align*}
||f||^2 = \int\limits_{} |f(r\omega)|^2r^{p^2-1}drd\omega .
\end{align*}

In particular, we have
\begin{align*}
||f_0||^2 = e^{-2r}r^{-1}dr d\omega.
\end{align*}

Now it is obvious that $||f_0|| = \infty$, i.e., $f_0 \not\in H$. Let us check that
$||\beta(n)|| < \infty$. In polar coordinates, we have
\begin{align*}
||\beta(n)||^2 = \int\limits_{} |\exp(-ri\mathrm{Tr} \, (n\omega)) - 1|^2 
\exp(-2r)r^{-1}drd\omega ;
\end{align*}
since the expression under the absolute value vanishes for $r = 0$, the integral in $r$ converges, and hence the whole integral converges. Thus we have proved that
 $||\beta(n)|| < \infty$, i.e., $\beta(n)$ is a nontrivial 1-cocycle. The formula for the norm of 
$\beta(n)$ can be simplified by integrating in $r$. According to a well-known formula,
\begin{align}\label{eq:5}
\int\limits_0^{\infty} (e^{-ar} - e^{-br})r^{-1}dr = \log\frac{b}{a},
\end{align}
and we obtain $||\beta(n)||^2 = 
\int\log\left(1+\frac{1}{4}\left(\text{Tr}\left(n\omega\right)\right)^2\right)d\omega$.

\subsection{A special representation of the group $P = S \rtimes N$}
Now we make an appropriate choice of the function $a$, i.e., choose a multiplicative cocycle of the group $S$. Clearly, the representation $T_a$ of the group $S$ defined above generates, together with the original representation of the group $N$, a representation of the whole Iwasawa group~$P$. 

\begin{theorem}
The operators $T_a(s_0)$ on the group $S$ given by the formula
\begin{align*}
T_a(s_0)f(m) = \frac{a(s_0^{*}ms_0)}{a(m)}f(s_0^{*}ms_0),
\end{align*}
where $a(m) = \|m\|^{p^2/2}$, form a bounded special nonunitary representation of the group~$P$.
\end{theorem}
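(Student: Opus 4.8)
The plan is to verify two assertions about the representation $T_a$ with the specific choice $a(m)=\|m\|^{p^2/2}$: first, that the operators $T_a(s_0)$ are bounded (so that together with the unitary $T(n)$ they give a genuine, not merely formal, bounded representation of $P$), and second, that this representation is special, i.e., carries a nontrivial $1$-cocycle. For boundedness I would invoke part~2 of the Corollary: it suffices to show that the function $b(m,s_0)$ is bounded uniformly in $m$ (for each fixed $s_0$). Writing out $b(m,s_0)$ with $a(m)=\|m\|^{p^2/2}$ gives
\begin{align*}
b(m,s_0) = \frac{\|s_0^{*}ms_0\|^{p^2/2}}{\|m\|^{p^2/2}}\left(\frac{d\nu(s_0^{*-1}ms_0^{-1})}{d\nu(m)}\right)^{1/2}.
\end{align*}
The Radon--Nikodym factor is the constant $\theta^{-2p}(s_0)$ by the quasi-invariance formula $d(s^{*}ms)=\theta^{2p}(s)\,dm$, so it contributes nothing to the question of boundedness in $m$. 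Thus the whole problem reduces to bounding the ratio $\|s_0^{*}ms_0\|/\|m\|$ over $m\in\nstar_0$, and here the key point is that the map $m\mapsto s_0^{*}ms_0$ is a fixed invertible linear operator on the space of Hermitian matrices, so this ratio is bounded above (and below) by its operator norm (and the norm of its inverse). This gives $|b(m,s_0)|\le c(s_0)<\infty$, establishing boundedness.

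Next I would establish that the representation is special by exhibiting a nontrivial cocycle. The natural candidate is to transport the cocycle $\beta$ already constructed for $N$ on the full space $H=L^2(\nstar,dm)$, using the function $f_0(m)=e^{-\|m\|}\|m\|^{-p^2/2}$, and to check that it remains a cocycle for the larger group $P$ once the $S$-action is twisted by $a$. The relevant computation is to verify the cocycle identity~\eqref{eq:1} for the map $\beta$ defined on all of $P$ via $\beta(g)=T(g)f_0-f_0$; this is automatic for any $f_0$, so the real content is that $\beta(g)\in\mathcal{H}$ for every $g$ while $f_0\notin\mathcal{H}$, which makes $\beta$ a genuine (noncoboundary) cocycle of $T_a$. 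The choice $a(m)=\|m\|^{p^2/2}$ is dictated precisely by compatibility with $f_0$: the weight $\|m\|^{-p^2/2}$ in $f_0$ and the cocycle $a$ are matched so that the $S$-part of the action carries $f_0$ to functions differing from $f_0$ by elements of $\mathcal{H}$.

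The heart of the argument is therefore checking that $\beta(s_0,0)=T_a(s_0)f_0-f_0\in\mathcal{H}$, i.e.\ that the $S$-translation leaves the boundary datum $f_0$ inside the Hilbert space modulo $\mathcal{H}$. I would compute $T_a(s_0)f_0$ explicitly: with $a(m)=\|m\|^{p^2/2}$,
\begin{align*}
T_a(s_0)f_0(m)=\frac{\|s_0^{*}ms_0\|^{p^2/2}}{\|m\|^{p^2/2}}\,\frac{e^{-\|s_0^{*}ms_0\|}}{\|s_0^{*}ms_0\|^{p^2/2}}=\frac{e^{-\|s_0^{*}ms_0\|}}{\|m\|^{p^2/2}},
\end{align*}
so that the two singular prefactors $\|s_0^{*}ms_0\|^{\pm p^2/2}$ cancel and we are left with the \emph{same} weight $\|m\|^{-p^2/2}$ as in $f_0$ but with the exponent $\|s_0^{*}ms_0\|$ in place of $\|m\|$. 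Then
\begin{align*}
T_a(s_0)f_0(m)-f_0(m)=\frac{e^{-\|s_0^{*}ms_0\|}-e^{-\|m\|}}{\|m\|^{p^2/2}},
\end{align*}
and passing to spherical coordinates, exactly as for the $N$-cocycle, the numerator vanishes at $r=\|m\|=0$ (both exponentials tend to $1$), which kills the non-integrable $r^{-1}$ singularity at the origin; the integral in $r$ then converges just as in~\eqref{eq:5}, using the same argument that gave $\|\beta(n)\|<\infty$ for the $N$-part. This cancellation of the singular weights is the crucial feature and the step I expect to be the main obstacle to make fully rigorous, since one must control the behavior of $\|s_0^{*}ms_0\|$ both near the origin and at infinity uniformly on the sphere $\Omega$; granting it, $\beta$ extends to a nontrivial $1$-cocycle of $T_a$ on all of $P$, and combined with the operator-irreducibility from the Proposition and the injectivity (faithfulness) of the action, this proves that $T_a$ is a bounded special nonunitary representation of $P$.
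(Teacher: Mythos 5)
Your two substantive verifications track the paper's own proof almost step for step: boundedness is reduced to boundedness of the weight $b(m,s_0)$ (the paper's $c(m,s_0)$ in \eqref{eq:6}, shown bounded for the same reason you give), and speciality comes from exactly the cancellation you found, $T_a(s_0)f_0(m)=e^{-\lvert s_0^{*}ms_0\rvert}/\lvert m\rvert^{p^2/2}$, followed by integration in $r$. Moreover, the step you flag as the ``main obstacle'' is in fact immediate: by homogeneity, $\lvert s_0^{*}(r\omega)s_0\rvert=r\,\lvert s_0^{*}\omega s_0\rvert$, so in spherical coordinates $\lVert\beta(s_0)\rVert^2=\int\lvert e^{-r\lvert s_0^{*}\omega s_0\rvert}-e^{-r}\rvert^2\,r^{-1}\,dr\,d\omega$, and \eqref{eq:5} evaluates the $r$-integral in closed form as $\log\bigl((1+\lvert s_0^{*}\omega s_0\rvert)^2/(4\lvert s_0^{*}\omega s_0\rvert)\bigr)$; this is integrable over $\Omega$ because $\omega\mapsto\lvert s_0^{*}\omega s_0\rvert$ is continuous and strictly positive on the compact sphere, hence bounded above and below away from zero. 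That is precisely how the paper closes the estimate, so no uniformity issue remains.

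The genuine gap is nonunitarity: it is part of the theorem's assertion, and your proposal never proves it --- the final sentence simply declares it. It does not follow from speciality or irreducibility: whether $P$ admits a \emph{unitary} faithful special representation for $p>1$ is exactly the open question the paper records at the end, so ``special'' and ``nonunitary'' are logically independent claims here. The paper argues directly: $T_a$ is unitary if and only if the weight in \eqref{eq:6} is identically $1$, i.e., if and only if $a(s^{*}ms)/a(m)$ equals the constant (in $m$) power of $\theta(s)$ coming from the quasi-invariance $d(s^{*}ms)=\theta^{2p}(s)\,dm$; with $a(m)=\lvert m\rvert^{p^2/2}$ one has $a(s^{*}ms)/a(m)=\lvert s^{*}\omega s\rvert^{p^2/2}$, which for $p>1$ genuinely depends on $\omega$ (take $s$ diagonal with distinct entries), so the operators are not unitary, while for $p=1$ the ``sphere'' in $\nstar_0$ is a single point, the weight is constant, and the representation is in fact unitary. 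Note that your own boundedness computation already contains the needed fact: your $b(m,s_0)=\theta^{-p}(s_0)\,\lvert s_0^{*}\omega s_0\rvert^{p^2/2}$ (the constant Jacobian factor is $\theta^{-p}(s_0)$ after the square root, not $\theta^{-2p}(s_0)$, though you use only its constancy) is bounded but manifestly nonconstant for $p>1$, so one more line --- a weighted composition operator of this form is unitary only if the modulus of its full weight is a.e.\ equal to $1$, the converse direction of part 1 of Corollary~1 --- would have completed the proof.
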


\begin{proof}
As we have proved, $||f_0|| = \infty$, hence it suffices to check that the norm of the cocycle
$\beta(s) = T_a(s)f_0 - f_0$ is finite:
$||\beta(s)|| < \infty$ for every $s \in S$. We use the expression for elements $m$ in polar coordinates
 $m = r\omega$. It follows from the explicit formulas for
  $a$ and $T_a$ that $\dfrac{a(s_0^{*}ms_0)}{a(m)} = ||s_0^{*}\omega 
s_0||^{p^2/2}$ and 
\begin{align*}
||\beta(s_0)||^2 = \int \Big\lvert\,\lvert s_0^{*}\omega 
s_0\rvert^{p^2/2}\frac{e^{-r\lvert s_0^{*}\omega 
s_0\rvert}}{r^{p^2/2}\lvert s_0^{*}\omega 
s_0\rvert}^{p^2/2}-\frac{e^{-r}}{r^{p^2/2}}\Big\rvert^2r^{p^2-1}drd\omega. 
\end{align*}
The obtained expression can be simplified:
\begin{align*}
||\beta(s_0)||^2 = \int \lvert e^{-r \lvert s_0^{*}\omega s_0\rvert} - 
e^{-r}\rvert ^2 r^{-1} dr d\omega. 
\end{align*}
In view of (\ref{eq:5}), integrating in $r$ yields the following formula for the norm of the nontrivial 1-cocycle
 $\beta(s_0)$: 
\begin{align*}
||\beta(s_0)||^2 = \int\limits_{\Omega} \log 
\left(\dfrac{(1+|s_0^{*}\omega s_0|)^2}{4|s_0^{*}\omega s_0|}\right) 
d\omega. 
\end{align*}
The convergence of the integral follows from the fact that the function $|s_0^{*} 
\omega s_0|$ is bounded from below. Thus the cocycle
$\beta(s_0)$ is nontrivial, i.e., the representation $T_a$ of the group $S$ is special.

To prove that this representation  is nonunitary, we use the following observation: the representation
 $T_a$ of the group $S$ is unitary if and only if
\begin{align*}
\frac{a(s^{*}ms)}{a(m)} = \theta^{p^2/2}(s).
\end{align*}

Indeed, according to Corollary~1,   the operators $T_a$ are unitary if and only if $c(m, s_0) \equiv 1$.
This immediately implies the required assertion in the case $p>1$.

In the particular case $p = 1$, we  have $S = \mathbb{R}_{+}^{*}$ and  
 $c(m, s_0)\equiv 1$. Thus the operators $T_a$ form a unitary representation. In other words, for
$p = 1$ the extension of the special representation of the subgroup $N$ to a unitary representation of the whole Iwasawa group is also special.

Finally, note that the operators $T_a(s_0)$ are bounded. Indeed, it follows from the general formula that
\begin{align}\label{eq:6}
\lVert T_a(s_0)f(m)\rVert^2 = \int \lvert f(m)c(m, s_0)\rvert^2\, dm,
\end{align}
where $c(m, s_0) =\dfrac{\left(a\left(m\right)\right)}{a\left(s_0^{*-1}m 
s_0^{-1}\right)}\left(\dfrac{ds_0^{*-1}ms_0^{-1}}{dm}\right)^{1/2}$. 

Hence the boundedness of the operators of the representation for all $p$ follows from the boundedness of the functions
 $c(m, s_0)$.  The theorem is proved.
\end{proof}

\begin{corr}
If $a(m) \not= \lVert m\rVert^{p^2/2}$, then the formula for $\beta(s_0)$ takes the form
\begin{align*}
\lVert\beta(s_0)\rVert^2 = \int \lvert b(\omega)e^{-r\lvert s_0^{*}\omega s_0\rvert} - e^{-r}\rvert^2 r^{-1}drd\omega,
\end{align*}
where $b(\omega) \not\equiv 1$. In this case, the integrand does not vanish at
 $r = 0$, hence the integral in $r$ diverges. Thus
$\beta(s_0)$ is not a well-defined  {\rm 1}-cocycle in the representation space. 
\end{corr}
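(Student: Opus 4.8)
The plan is to rerun the computation from the proof of the Theorem, but keeping a general positive function $a$ (taken, as in the Theorem's setting, positively homogeneous of degree $p^2/2$, the class containing $\|m\|^{p^2/2}$) and tracking exactly the coefficient of the first exponential. First I would substitute $f_0(m)=e^{-|m|}|m|^{-p^2/2}$ into $\beta(s_0)(m)=T_a(s_0)f_0(m)-f_0(m)$, write $m=r\omega$ with $r=|m|$, $\omega\in\Omega$, and use $|s_0^{*}ms_0|=r\,|s_0^{*}\omega s_0|$. Factoring out $r^{-p^2/2}$ gives
\[
\beta(s_0)(r\omega)=\frac{1}{r^{p^2/2}}\Bigl[\,b(\omega)\,e^{-r|s_0^{*}\omega s_0|}-e^{-r}\Bigr],\qquad b(\omega)=\frac{a(s_0^{*}\omega s_0)}{a(\omega)\,|s_0^{*}\omega s_0|^{p^2/2}},
\]
where $b$ depends on $\omega$ alone precisely because $a$ is homogeneous of degree $p^2/2$ (this is what cancels the radial $r^{p^2/2}$ factors). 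Multiplying by $r^{p^2-1}\,dr\,d\omega$ from $dm$ cancels the $r^{-p^2}$ in $|\beta(s_0)|^2$ down to $r^{-1}$ and reproduces the displayed formula for $\|\beta(s_0)\|^2$. When $a=\|m\|^{p^2/2}$ one has $a(\omega)=1$ and $a(s_0^{*}\omega s_0)=|s_0^{*}\omega s_0|^{p^2/2}$, so $b\equiv 1$ and one recovers the convergent integral of the Theorem.

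Next I would carry out the divergence analysis near $r=0$. Expanding the square with $\alpha=|s_0^{*}\omega s_0|>0$ (bounded below and above since $\omega$ is a unit vector and $s_0$ invertible) gives
\[
\bigl|b(\omega)e^{-\alpha r}-e^{-r}\bigr|^2=b(\omega)^2e^{-2\alpha r}-2b(\omega)e^{-(\alpha+1)r}+e^{-2r}.
\]
The general fact behind (\ref{eq:5}) is that $\int_0^{\infty}\bigl(\sum_k c_k e^{-a_k r}\bigr)r^{-1}\,dr$ converges if and only if $\sum_k c_k=0$; otherwise the integrand behaves like $(\sum_k c_k)/r$ at the origin and the integral diverges logarithmically. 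Here $\sum_k c_k=b(\omega)^2-2b(\omega)+1=(b(\omega)-1)^2$, so the inner radial integral is finite exactly when $b(\omega)=1$ and equals $+\infty$ whenever $b(\omega)\neq1$. By nonnegativity of the integrand (Tonelli), it suffices that $b(\omega)\neq1$ on a set of positive $d\omega$-measure for the whole double integral, hence $\|\beta(s_0)\|^2$, to be infinite; then $\beta(s_0)\notin\mathcal H$, so $\beta$ fails to take values in the representation space and cannot be a $1$-cocycle.

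What remains, and what I expect to be the main obstacle, is to justify that $a\neq\|m\|^{p^2/2}$ genuinely forces $b\not\equiv1$ for some $s_0$, i.e. that the Theorem's choice is essentially the only one making $\beta$ well defined. I would argue by contraposition: suppose $b_{s_0}(\omega)=1$ for every $s_0\in S$ and every $\omega$, i.e. $a(s_0^{*}\omega s_0)=a(\omega)\,|s_0^{*}\omega s_0|^{p^2/2}$ identically. Using the transitivity of the $S$-action on the orbit $\nstar_0=\{is^{*}s\}$ (as in the proof of Proposition~1), take the base point $\omega_0=iI/\sqrt p\in\Omega\cap\nstar_0$; then every $m=is^{*}s$ equals $\sqrt p\,(s^{*}\omega_0 s)$, and applying the relation at $\omega=\omega_0$ together with the homogeneity of $a$ lets the scalar $\sqrt p$ cancel, yielding $a(m)=a(\omega_0)\,|m|^{p^2/2}$ on all of $\nstar_0$. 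Thus $b\equiv1$ for all $s_0$ forces $a$ to be a constant multiple of $\|m\|^{p^2/2}$ (the constant altering neither $T_a$ nor $b$); equivalently, if $a$ is not this function then some $s_0$ gives $b_{s_0}\not\equiv1$, and for that $s_0$ the preceding paragraph yields $\|\beta(s_0)\|=\infty$. The delicate points here are keeping the homogeneity hypothesis explicit so that $b$ really reduces to a function of $\omega$, and ensuring $b\neq1$ on a positive-measure set of directions rather than at isolated $\omega$; both are secured by the transitivity and homogeneity just invoked.
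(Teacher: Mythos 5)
Your computation is correct and, for its core, coincides with the argument the paper leaves implicit: substitute a general $a$ into the norm of $\beta(s_0)$, pass to polar coordinates so that a factor $b(\omega)$ appears in front of the first exponential, and observe that the radial integrand behaves like $(b(\omega)-1)^2r^{-1}$ near $r=0$, forcing divergence wherever $b(\omega)\neq 1$ (the paper compresses this to ``the integrand does not vanish at $r=0$''; your criterion that $\int_0^\infty\bigl(\sum_k c_k e^{-a_k r}\bigr)r^{-1}\,dr$ converges iff $\sum_k c_k=0$ is a clean formalization of the same point). Where you genuinely go beyond the paper is in two places the paper silently skips. First, you make explicit that $b$ is a function of $\omega$ alone only if $a$ is positively homogeneous of degree $p^2/2$; the corollary's displayed formula presupposes this, and without it the statement would need reformulating. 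Second, the paper simply asserts $b(\omega)\not\equiv 1$, whereas you prove it by contraposition: if $a(s_0^*\omega s_0)=a(\omega)\lvert s_0^*\omega s_0\rvert^{p^2/2}$ for all $s_0$, then transitivity of $S$ on the orbit $N^*_0$ through the base point $\omega_0=iI/\sqrt{p}$, combined with homogeneity, gives $a(m)=a(\omega_0)\lvert m\rvert^{p^2/2}$ on all of $N^*_0$. This buys you a sharper (and in fact corrected) statement: the true dichotomy is whether $a$ is a \emph{constant multiple} of $\lVert m\rVert^{p^2/2}$, since constants change neither $T_a$ nor $b$ --- the paper's literal hypothesis $a\neq\lVert m\rVert^{p^2/2}$ is slightly off, and you fix it. The one point you flag but do not fully close --- that $b_{s_0}\neq 1$ at some $\omega$ must hold on a set of positive $d\omega$-measure --- is a hairline gap: for continuous $a$ it is automatic, and for merely measurable $a$ (which is all the representation sees, up to null sets) the contraposition should be run almost everywhere via Fubini, after which the same conclusion holds; this patch is routine but worth writing out if you want the measurable case.
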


The question of the existence of special representations of the group $P$ reduces to estimating the norms of cocycles for the subgroups $N$ and $S$. It was proved above that for
$p>1$, the nontrivial 1-cocycle $\beta$ on  $N$ associated with the function $f_0(m) = \frac{e^{-\lvert 
m\rvert}}{\lvert m\rvert^{p^2/2}}$ cannot be extended to a unitary representation
 $T^{0}$ of the subgroup $S$. A similar result holds for a wider class of functions of the form
 $f_0\left(m\right) = 
\frac{u\left(m\right)}{\lvert m\rvert^{p^2/2}}$. We do not dwell on this, since it is not known whether this is true for arbitrary functions. Note that $T^{0}$ is the unitary representation of the group $P$ corresponding to the right-invariant Haar measure on $S$. Thus if our conclusion is true for any functions
$a$ and $f$, this will prove that the group $P$ has no unitary faithful special representations. This question is still open.

\medskip
Translated by N.~V.~Tsilevich.

\bigskip

Vershik A.~M.,  Graev M.~I. Cohomology of the Iwasawa subgroup of the group $U(p,p)$ in nonunitary representations.
\smallskip

We construct a special injective nonunitary bounded irreducible
representation for the Iwasawa subgroup of the semisimple Lie group
$U(p,p)$ with $p>1$.

\end{document}